\tikzstyle{mybox} = [draw=black, fill=white,  thick,
\tikzstyle{mybox} = [draw=black, fill=white,  thick,
\tikzstyle arrowstyle=[scale=1]
\tikzstyle directed=[postaction={decorate,decoration={markings,
		mark=at position .65 with {\arrow[arrowstyle]{stealth}}}}]
\tikzstyle reverse directed=[postaction={decorate,decoration={markings,
		mark=at position .65 with {\arrowreversed[arrowstyle]{stealth};}}}]
\newcommand{\boundellipse}[3]
{(#1) ellipse (#2 and #3)
}
\newtheorem{theorem}{Theorem}
\newtheorem{cor}{Corollary}
\newtheorem{prop}{Proposition}
\theoremstyle{definition}
\newtheorem{definition}{Definition}
\newtheorem{remark}{Remark}
\begin{document}

\title{A Geometric Algorithm for Solving Linear Systems}
\author{Bahman Kalantari, Chun Lau, Yikai Zhang \\
Department of Computer Science, Rutgers University, NJ\\
kalantari@cs.rutgers.edu, larryl@cs.rutgers.edu, yz422@cs.rutgers.edu
}
\date{}
\maketitle

\begin{abstract}
Based on the geometric {\it Triangle Algorithm} for testing membership of a point in a convex set, we present a novel iterative algorithm for testing the solvability of a real linear system $Ax=b$, where $A$ is an $m \times n$  matrix of arbitrary rank. Let $C_{A,r}$ be the ellipsoid determined as the image of the Euclidean ball of radius $r$ under the linear map $A$. The basic procedure in our  algorithm computes a point in $C_{A,r}$ that is either within $\varepsilon$ distance to $b$, or acts as a certificate proving $b \not \in C_{A,r}$.  Each iteration takes $O(mn)$ operations and when $b$ is well-situated in $C_{A,r}$, the number of iterations is proportional to $\log{(1/\varepsilon)}$. If $Ax=b$ is solvable the algorithm computes an approximate solution or the minimum-norm solution. Otherwise, it computes a certificate to unsolvability, or the minimum-norm least-squares solution. It is also applicable to complex input.  In a computational comparison with the state-of-the-art algorithm BiCGSTAB ({\it Bi-conjugate gradient method stabilized}), the Triangle Algorithm is very competitive. In fact, when the iterates of BiCGSTAB do not converge, our algorithm can verify $Ax=b$ is unsolvable and approximate the minimum-norm least-squares solution. The Triangle Algorithm is robust, simple to implement, and requires no preconditioner, making it attractive to practitioners, as well as researchers and educators.
\end{abstract}

\section{Introduction}
The significance of solving a linear system of equations arises in a variety of areas, such as Numerical Analysis, Economics, Computational Biology, and even in high school education.  Solving a linear system of equations is undoubtedly one of the most practical problems in numerous aspects of scientific computing. Gaussian elimination is the most familiar method for solving a linear system of equations, discussed in numerous books, e.g. Atkinson \cite{Atkinson}, Bini and Pan \cite{Bini}, Golub and van Loan \cite{Golub}, and Strang \cite{Strang}.  The method itself is an important motivation behind the study of linear algebra. Iterative methods for solving linear systems offer very important alternatives to direct methods and find  applications in problems that require the solution of very large or sparse linear systems. For example, problems from discretized partial differential equations lead to large sparse systems of equations, where direct methods become impractical.

Iterative methods, when applicable, generate a sequence of approximate solutions. They begin with an initial approximation and successively improve it until a desired approximation is reached. Iterative methods include such classical methods as the Jacobi, the Gauss-Seidel, the successive over-relaxation (SOR), the accelerated  over-relaxation (AOR), and the symmetric successive over-relaxation method which applies when the coefficient matrix is symmetric. When the coefficient matrix is symmetric and positive definite the conjugate gradient method (CG) becomes applicable. Convergence rate of iterative methods can often be substantially accelerated by preconditioning.  Some major references in the vast subject of iterative methods include, Barrett et al. \cite{Bar}, Golub and van Loan \cite{Golub},  Greenbaum \cite{Green}, Hadjidimos \cite{Hadjid}, Saad \cite{Saad}, van der Vorst \cite{van1}, Varga \cite{Var}, and Young \cite{Young}.

To guarantee convergence, iterative methods often require the coefficient matrix to satisfy certain conditions.  Also, certain decompositions are necessary to carry out some of these iterative methods. In some earlier analysis of iterative methods only convergence criteria are considered, rather than algorithmic complexity. However, in some cases theoretical complexity analysis is provided, see e.g. Reif \cite{Reif} who considers the complexity of iterative methods for sparse diagonally dominant matrices. The steepest descent method and conjugate gradient method are well-analyzed for solving an  $n \times n$ linear system $Ax = b$ where $A$ is symmetric positive definite. When $A$ is not positive definite one can apply these methods to the equivalent system $A^TAx=A^Tb$. However, a general consensus is that solving the normal equations can be inefficient when $A$ is poorly conditioned, see e.g. Saad \cite{Saad}. The reason being that the condition number of $A^TA$ is the square of the condition number of $A$.

The major computational effort in each iteration of the iterative methods involves matrix-vector multiplication, thus requiring $O(n^2)$ operations. This makes iterative methods very attractive for solving large systems and also for parallelization. There is also a vast literature on parallelization of iterative methods for large systems, see Demmel \cite{Dem}, Dongarra et al. \cite{Don}, Duff and van der Vorst \cite{Duff},  van der Vorst  \cite{van1}, van der Vorst and Chan \cite{Van2}. A popular iterative method in practice to solve a general (non-symmetric) linear system is the {\it bi-conjugate gradient method stabilized} (BiCGSTAB), a Krylov subspace method that has faster and smoother convergence than {\it conjugate gradient squared method (CGS)} as well as the {\it biconjugate gradient method}. More details are given in van der Vorst \cite{Vorst1} and Saad \cite{Saad}. BiCGSTAB is the state-of-the-art method of choice as well \cite{Chen}.

In this article we consider solving a linear system $Ax=b$, where $A$ is an $m \times n$ real or complex matrix of arbitrary rank and describe a novel geometric algorithm for computing an approximate solution, minimum-norm solution, least-squares solution, or minimum-norm least-squares solution. Thus, when $A$ is invertible, our algorithm approximates the unique solution. When $A$ is not invertible but $Ax=b$ is solvable, it approximates the solution with smallest norm, and when $Ax=b$ has no solution, it approximates the solution that minimizes $\Vert Ax - b \Vert$ with smallest norm. Also, when $Ax=b$ has no solution, it computes a certificate proving unsolvability.

The algorithm is inspired by the {\it Triangle Algorithm} developed for the {\it  convex hull membership problem} problem and its generalization in Kalantari \cite{kalfull, kalcon}.  Driven by the convex hull membership problem,  we propose a variant of the Triangle Algorithm for linear systems of arbitrary dimension, requiring no assumption on the matrix $A$. We also make a computational comparison
with the state-of-the-art algorithm BiCGSTAB in solving a square linear system. Our algorithm shows promising and strong results in this comparison, both in terms of speed and the quality of solutions. The relevance of this comparison lies in the fact that both algorithms offer alternatives to exact methods and to the iterative methods while requiring none of the structural restrictions of the latter methods. The extreme simplicity and theoretical complexity bounds suggest practicality of the new method, especially for large scale or sparse systems. Our computational results are very encouraging and support the practicality of the Triangle Algorithm.

The article is organized as follows. In Section \ref{sec2}, we describe the basics of the Triangle Algorithm and its complexity bounds.  In Section \ref{sec3}, we consider the Triangle Algorithm and its modifications for solving a linear system, $Ax=b$. We prove some properties for the basic tasks in the Triangle Algorithm in testing if $b$ lies in an ellipsoid $C_{A,r}=\{Ax: \Vert x \Vert \leq r\}$, i.e.  the image of ball of radius $r$ under the linear map $A$. In  Subsection \ref{subsec3.1}, we describe Algorithm \ref{alg1} for testing if $b \in C_{A,r}$. In Subsection \ref{subsec3.2}, we describe Algorithm \ref{alg2} for testing if $Ax=b$ is solvable, where the radius $r$ is repeatedly increased, starting from an initial estimate. In Subsection \ref{subsec3.3}, we establish properties of this algorithm for testing the solvability of $Ax=b$ and if not, in approximating the least-squares solution, as well as the minimum-norm least-squares solution. In Section \ref{sec4}, we present computational results with the Triangle Algorithm and contrast it with the widely-used BiCGSTAB. We conclude with some final remarks.

\section{Triangle Algorithm for General Convex Hull Membership} \label{sec2}
Let the {\it general convex hull membership} (GCHM) problem be defined as follows:
Given a  bounded subset $S$ in $\mathbb{R}^m$ and a distinguished point $p \in \mathbb{R}^m$, determine if $p \in C=conv(S)$, the convex hull of $S$. Specifically, given $\varepsilon \in (0, 1)$, either compute $p' \in C$ such that $\|p' - p\| \leq \varepsilon$, or find a hyperplane that separates $p$ from $C$.  We describe the basics of the {\it Triangle Algorithm} for solving GCHM from \cite{kalfull, kalcon}.

\begin{definition} \label{defp} Given an arbitrary  $p' \in C$, $p' \not =p$,  called {\it iterate}, a point $v \in C$ is called a {\it pivot} (at $p'$) if
\begin{equation} \label{pivot}
\|p' - v\| \geq \|p - v\| \quad \iff \quad
(p-p')^Tv \geq \frac{1}{2} (\Vert p \Vert^2 - \Vert p' \Vert^2).
\end{equation}
If no pivot exists $p'$ is called a {\it witness}. A pivot $v$ is a {\it strict pivot} if
\begin{equation} \label{pivots}
(p-p')^T(v-p) \geq 0.
\end{equation}
Thus, when the three points are distinct $\angle p'pv$ is at least $\pi/2$.
\end{definition}

\begin{prop} An iterate $p' \in C$ is a witness if and only if
the orthogonal bisecting hyperplane to the line segment $pp'$ separates $p$ from $C$, implying $p \not\in C$. \qed
\end{prop}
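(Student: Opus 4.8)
The statement is an "if and only if" characterizing when an iterate $p'$ fails to admit a pivot. The plan is to unwind the definition of pivot and translate the non-existence of a pivot into a statement about the bisecting hyperplane. Recall from Definition \ref{defp} that $v \in C$ is a pivot at $p'$ precisely when $(p-p')^T v \geq \tfrac12(\|p\|^2 - \|p'\|^2)$. Let $H$ denote the orthogonal bisecting hyperplane of the segment $pp'$; its equation is exactly $(p-p')^T x = \tfrac12(\|p\|^2 - \|p'\|^2)$, since a point $x$ lies on $H$ iff $\|x-p\| = \|x-p'\|$, which expands to that linear equation. So the pivot inequality says precisely that $v$ lies in the closed halfspace $H^+ = \{x : (p-p')^T x \geq \tfrac12(\|p\|^2-\|p'\|^2)\}$, which is the side of $H$ containing $p$ (one checks $p \in H^+$ because $(p-p')^Tp - \tfrac12(\|p\|^2-\|p'\|^2) = \tfrac12\|p-p'\|^2 > 0$, while $p' \in H^-$ with the same computation giving $-\tfrac12\|p-p'\|^2 < 0$).

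With that dictionary in place, the two directions are short. First I would prove the contrapositive-flavored equivalence directly: $p'$ is a witness $\iff$ no pivot exists $\iff$ no point of $C$ lies in $H^+$ $\iff$ $C \subseteq H^- \setminus H$ (strictly, since equality $(p-p')^Tv = \tfrac12(\|p\|^2-\|p'\|^2)$ would itself make $v$ a pivot by the non-strict inequality in \eqref{pivot}) $\iff$ $H$ strictly separates $p$ from $C$. The last implication "$p \not\in C$" is then immediate: $p \in H^+$ but $C \subseteq H^-$, so $p \notin C$.

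The one subtlety worth stating carefully is the boundary case: the pivot inequality in \eqref{pivot} is non-strict ($\geq$), so a point $v \in C$ on the hyperplane $H$ itself counts as a pivot. Hence "no pivot" forces $C$ to lie in the open halfspace strictly on the $p'$-side of $H$, which is why the separation we obtain is strict and genuinely certifies $p \notin C$. I expect no real obstacle here — the entire content is the observation that the pivot condition \eqref{pivot} is, verbatim, the halfspace membership condition for the bisecting hyperplane, together with the elementary fact that $p$ and $p'$ sit on opposite (open) sides of their bisector at equal distance $\tfrac12\|p-p'\|$. The proof is a few lines once the hyperplane's equation is identified with the right-hand side of \eqref{pivot}.
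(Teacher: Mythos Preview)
Your proposal is correct and is exactly the intended argument: the paper itself omits the proof (the \qed\ immediately follows the statement), treating it as immediate from Definition~\ref{defp}, and your unpacking of the pivot inequality \eqref{pivot} as the closed-halfspace condition for the perpendicular bisector of $pp'$ is precisely that immediate verification. The only thing worth noting is that ``separates'' in the statement should be read as strict separation, which you correctly derive from the non-strictness of the inequality in \eqref{pivot}.
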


The Triangle Algorithm works as follows: Given $\varepsilon \in (0,1)$ and $p' \in C$, if $\Vert p - p' \Vert \leq \varepsilon$ it stops. If $p'$ is a witness, $p \not \in C$. Otherwise, it computes a pivot $v$ and computes the next iterate $p''$ as the nearest point to $p$ on the line segment $p'v$.

\begin{prop} \label{prop1}  Given $p$, $p'$ and a pivot $v$, the nearest point of $p$ on $p'v$ is
\begin{equation} \label{alph}
    p'' = (1 - \alpha)p' + \alpha v, \quad  \alpha = \min \bigg \{1, \frac{(p - p')^T(v - p')}{\|v - p'\|^2} \bigg \}. \qed
\end{equation}
\end{prop}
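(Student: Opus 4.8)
The plan is to treat this as a one–dimensional constrained least–squares problem: minimize the squared distance to $p$ over the segment $p'v$ by first finding the unconstrained minimizer, then projecting it onto the admissible interval, and finally using the pivot inequality to simplify that projection. Concretely, parametrize the segment by $x(t) = p' + t(v - p')$ for $t \in [0,1]$ and set $f(t) = \|x(t) - p\|^2$. Expanding gives $f(t) = \|v - p'\|^2\, t^2 - 2(p - p')^T(v - p')\, t + \|p' - p\|^2$, a convex quadratic in $t$ (here I assume $v \neq p'$; if $v = p'$ the segment degenerates to the point $p'$ and the statement is vacuous). Its global minimizer over $\mathbb{R}$ is $t^\star = (p - p')^T(v - p')/\|v - p'\|^2$.

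Since $f$ is convex, its minimizer over $[0,1]$ is the point of $[0,1]$ closest to $t^\star$, namely $\min\{1,\max\{0,t^\star\}\}$ — this is the elementary fact that a convex function restricted to an interval attains its minimum at the projection of its unconstrained minimizer onto that interval. Thus it remains only to show that the lower clamp is inactive, i.e. $t^\star \geq 0$, so that this expression collapses to $\alpha = \min\{1,t^\star\}$ and hence the nearest point is $x(\alpha) = (1-\alpha)p' + \alpha v$, as claimed.

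The one place the hypothesis enters is precisely the inequality $t^\star \geq 0$, which is exactly what being a pivot at $p'$ buys us. Because $\|v - p'\|^2 > 0$, it suffices to show $(p - p')^T(v - p') \geq 0$, equivalently $(p - p')^Tv \geq p^Tp' - \|p'\|^2$. From the pivot condition $(p - p')^Tv \geq \tfrac12(\|p\|^2 - \|p'\|^2)$ in \eqref{pivot}, it is enough to verify $\tfrac12(\|p\|^2 - \|p'\|^2) \geq p^Tp' - \|p'\|^2$; rearranging, this is $\tfrac12\|p - p'\|^2 \geq 0$, which always holds. I do not expect a genuine obstacle here: the only points requiring care are handling the degenerate case $v = p'$ and stating the "project the unconstrained minimizer onto $[0,1]$" step as a consequence of convexity rather than asserting it informally.
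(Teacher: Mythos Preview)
Your argument is correct. The parametrization, the identification of the unconstrained minimizer $t^\star$, the convexity-based projection onto $[0,1]$, and the use of the pivot inequality \eqref{pivot} to kill the lower clamp are all sound; the algebraic check $\tfrac12(\|p\|^2 - \|p'\|^2) \geq p^Tp' - \|p'\|^2 \iff \tfrac12\|p-p'\|^2 \geq 0$ is exactly right.

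As for comparison with the paper: there is nothing to compare against. The paper states Proposition~\ref{prop1} with a terminal \qed\ and no proof, treating it as a standard fact imported from \cite{kalfull, kalcon}. So you have in fact supplied an argument where the paper gives none. Your observation that the pivot hypothesis is what guarantees $t^\star \geq 0$ (and hence that the formula needs only $\min\{1,t^\star\}$ rather than $\min\{1,\max\{0,t^\star\}\}$) is the one non-routine point, and you handle it cleanly.
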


The Triangle Algorithm replaces $p'$ with $p''$ and repeats the above iteration. The test for the existence of a pivot (or strict pivot) in the worst-case amounts to computing the optimal solution $v_*$ of
\begin{equation} \label{pivottest}
\max\{c^Tx : x \in C\}, \quad c = p - p'.
\end{equation}
It follows from (\ref{pivot}) that $v_*$ is either a pivot, or $p'$ is a witness. In fact, if $p \in C$, it follows from (\ref{pivots}) that $v_*$ is a strict pivot. The correctness of the Triangle Algorithm for GCHM is due to the following theorem proved in \cite{kalfull, kalcon}.
\begin{theorem} \label{thm1} {\rm {(Distance Duality)}}
$p \in C$ if and only if for each $p' \in C$ there exists a (strict) pivot $v \in C$. Equivalently, $p \not\in C$ if and only if there exists a witness $p' \in C$.
\qed
\end{theorem}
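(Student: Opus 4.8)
The plan is to prove the two logically equivalent halves of the statement separately, using the linear program \eqref{pivottest} for one direction and the nearest‑point projection onto a convex set for the other. Since the negation of ``every iterate $p'$ admits a pivot'' is ``some iterate $p'$ admits no pivot, i.e.\ is a witness,'' it suffices to establish: (a) if $p \in C$ then every iterate $p' \neq p$ admits a \emph{strict} pivot; and (b) if $p \notin C$ then some iterate is a witness. Because a strict pivot, by expanding \eqref{pivots}, automatically satisfies \eqref{pivot} and is therefore a pivot, (a) and (b) together yield all the stated equivalences at once. Throughout I would lean on the elementary identity $(p - p')^T p - \tfrac12(\Vert p \Vert^2 - \Vert p' \Vert^2) = \tfrac12 \Vert p - p' \Vert^2$, which is precisely what justifies the ``$\iff$'' already recorded in \eqref{pivot}.

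For (a), fix an iterate $p' \in C$ (so $p' \neq p$ by Definition~\ref{defp}) and set $c = p - p'$. Since $S$ is bounded, $\overline{C}$ is compact, so the supremum in \eqref{pivottest} is attained at some $v_* \in C$ (or in $\overline{C}$; see the last paragraph). As $p \in C$ is feasible for \eqref{pivottest}, $c^T v_* \geq c^T p$, i.e.\ $(p - p')^T(v_* - p) \geq 0$, which is exactly the strict‑pivot inequality \eqref{pivots}. Adding $(p-p')^Tp$ and using the identity above,
\begin{equation*}
(p - p')^T v_* \;\geq\; (p - p')^T p \;=\; \tfrac12(\Vert p \Vert^2 - \Vert p' \Vert^2) + \tfrac12 \Vert p - p' \Vert^2 \;\geq\; \tfrac12(\Vert p \Vert^2 - \Vert p' \Vert^2),
\end{equation*}
so $v_*$ also satisfies \eqref{pivot}; hence $v_*$ is a strict pivot at $p'$, and in particular a pivot.

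For (b), suppose $p \notin C$ and let $p'$ be the point of $\overline{C}$ nearest to $p$, so $p' \neq p$ and the variational characterization of the projection onto a convex set gives $(p - p')^T(x - p') \leq 0$ for every $x \in C$. If some $v \in C$ were a pivot, then \eqref{pivot} would give $(p - p')^T v \geq \tfrac12(\Vert p \Vert^2 - \Vert p' \Vert^2)$, while the projection inequality at $x = v$ gives $(p - p')^T v \leq (p - p')^T p'$. Combining these and computing $(p-p')^Tp'$ with the same identity,
\begin{equation*}
\tfrac12(\Vert p \Vert^2 - \Vert p' \Vert^2) \;\leq\; (p - p')^T v \;\leq\; (p - p')^T p' \;=\; \tfrac12(\Vert p \Vert^2 - \Vert p' \Vert^2) - \tfrac12 \Vert p - p' \Vert^2,
\end{equation*}
which forces $\Vert p - p' \Vert^2 \leq 0$, i.e.\ $p = p'$, contradicting $p \notin C \ni p'$. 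Hence no pivot exists at $p'$, so $p'$ is a witness.

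The only non‑algebraic ingredient — and the main point to get right — is the existence of the maximizer $v_*$ in (a) and of the nearest point $p'$ in (b); both rest on compactness, which is why one passes to $\overline{C}$ when $S$ is merely bounded, and then checks that this is harmless for the membership question up to the tolerance $\varepsilon$ (every point of $\overline{C}$ lies within $\varepsilon$ of a point of $C$). Everything else is the repeated use of the completing‑the‑square identity; in particular the equivalence of the ``distance'' and ``hyperplane'' forms in \eqref{pivot}, which is the same fact that identifies a witness with an iterate whose orthogonal bisecting hyperplane separates $p$ from $C$, as in the preceding proposition.
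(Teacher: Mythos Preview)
Your argument is correct and is the standard proof of this duality: optimality of the linear program \eqref{pivottest} with $p$ feasible yields a strict pivot, and the variational inequality for the projection onto $C$ rules out any pivot at the nearest point. Note, however, that the paper does not actually prove Theorem~\ref{thm1}; it is quoted from \cite{kalfull, kalcon} and marked \qed, so there is no in-paper proof to compare against. Your handling of the compactness issue is adequate for the paper's purposes, since in the only instance used here $C = C_{A,r}$ is the continuous image of a closed ball and hence compact; as you implicitly noticed, if one only assumes $S$ bounded and $p \in \overline{C}\setminus C$ the witness half can fail, so the closure caveat is a defect of the theorem's hypotheses rather than of your proof.
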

The iteration complexity for Triangle Algorithm is given in the following theorem.
\begin{theorem} \label{thm2} {\rm {(Complexity Bounds)}}
Let $R = \max\{\|x - p\| : x \in C\}$. Let $\varepsilon \in (0,1)$.  Triangle Algorithm terminates with  $p' \in C$ such that one of the following conditions is satisfied:

(i) $\|p - p_{\varepsilon}\| \leq \varepsilon$ and the number of iterations is $O(R^2/\varepsilon^2)$. If $p$ is the center of ball of radius $\rho$ in the relative interior of $C$ and each iteration uses a strict pivot, the number of iterations is $O((R/\rho)^2 \ln{1 / \varepsilon})$.

(ii) $p'$ is a witness and the number of iterations is $O(R^2/\delta_*^2)$, where $\delta_* = \min \{\|x - p\| : x \in C\}$. Moreover,
\[
\delta_* \leq \|p' - p\| \leq 2\delta_*. \tag*{\qed}
\]
\end{theorem}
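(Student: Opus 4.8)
The plan is to reduce everything to a single one-step \emph{distance reduction inequality} and then turn it into the three stated bounds by elementary manipulation of a scalar recurrence. Let $p'=p_i$ be the current iterate, $v$ the pivot used by the algorithm (the maximizer $v_*$ of $x\mapsto (p-p')^Tx$ over $C$), $p''=p_{i+1}=p'+\alpha(v-p')$ the next iterate as in Proposition~\ref{prop1}, and $\delta_i=\|p-p_i\|$. Substituting the formula for $p''$ and using $\alpha\|v-p'\|^2=(p-p')^T(v-p')$ gives, in the interior case $\alpha<1$,
\[
\delta_{i+1}^2 \;=\; \delta_i^2 \;-\; \frac{\big((p-p')^T(v-p')\big)^2}{\|v-p'\|^2},
\]
while in the clipped case $\alpha=1$ (so $p''=v$) the law of cosines yields $\delta_i^2\ge \|v-p'\|^2+\delta_{i+1}^2$. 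I would then feed in the defining inequalities. Since $v$ is a pivot, $\|v-p'\|\ge\|p-v\|$, which is equivalent to $(p-p')^T(v-p')\ge\tfrac12\delta_i^2$ (and, in the clipped case, to $\delta_i^2\ge 2\delta_{i+1}^2$); together with $\|v-p'\|\le\operatorname{diam}(C)\le 2R$ this produces the uniform recursion
\[
\delta_{i+1}^2 \;\le\; \delta_i^2\Big(1-\frac{\delta_i^2}{16R^2}\Big).
\]
When $p$ is the center of a ball $B(p,\rho)\subseteq C$ and every step uses the strict pivot $v_*$, the point $p+\rho(p-p')/\|p-p'\|$ lies in $C$, so $(p-p')^T(v_*-p)\ge\rho\delta_i$ and hence $(p-p')^T(v_*-p')\ge\rho\delta_i$; the same two cases upgrade the recursion to the geometric contraction $\delta_{i+1}^2\le(1-\rho^2/(4R^2))\,\delta_i^2$ (in the clipped case one first checks $\|v_*-p'\|>\rho$, which again forces $\delta_i^2-\delta_{i+1}^2>\rho^2$).

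Given these recursions, part (i) is immediate. With $u_i=\delta_i^2$ and $u_{i+1}\le u_i-u_i^2/(16R^2)$, the standard trick $1/u_{i+1}\ge 1/u_i+1/(16R^2)$ telescopes to $1/u_k\ge 1/u_0+k/(16R^2)$, so $\delta_k^2\le 16R^2/k$, which drops below $\varepsilon^2$ once $k=O(R^2/\varepsilon^2)$; no witness can have occurred along the way (else we are in case (ii)), so the recursion was valid at every step. In the ball/strict-pivot regime, $\delta_k^2\le(1-\rho^2/(4R^2))^k\delta_0^2\le(1-\rho^2/(4R^2))^kR^2$, which is below $\varepsilon^2$ after $O\big((R/\rho)^2\ln(1/\varepsilon)\big)$ iterations using $-\ln(1-x)\ge x$.

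For part (ii), if the algorithm halts because $p'$ is a witness, then by the Distance Duality Theorem (Theorem~\ref{thm1}) $p\notin C$, so every iterate lies in $C$ at distance $\ge\delta_*>0$ from $p$; substituting $u_i\ge\delta_*^2$ into $1/u_k\ge 1/u_0+k/(16R^2)$ forces $k\le 16R^2/\delta_*^2$, i.e. a witness must appear within $O(R^2/\delta_*^2)$ iterations. For the norm estimate, let $x_*\in C$ realize $\|x_*-p\|=\delta_*$; the witness property applied to $v=x_*$ gives $\|p'-x_*\|<\|p-x_*\|=\delta_*$, so $\|p'-p\|\le\|p'-x_*\|+\|x_*-p\|<2\delta_*$, while $p'\in C$ gives $\|p'-p\|\ge\delta_*$.

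The only genuinely delicate step is the reduction inequality: one must keep the interior and clipped cases separate, verify the algebra turning the pivot (resp. strict-pivot-plus-ball) condition into a lower bound on $(p-p')^T(v-p')$, and carry the bound $\operatorname{diam}(C)\le 2R$ to get clean constants; everything afterwards is manipulation of a scalar recurrence. A minor point to dispatch is that $\delta_*>0$ in (ii) requires $C$ to be closed, which holds in the intended applications (and in general after passing to $\overline{\operatorname{conv}(S)}$).
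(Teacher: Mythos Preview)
The paper does not actually prove Theorem~\ref{thm2}; it is quoted from the earlier Triangle Algorithm papers and closed with a \qed. So there is no in-paper argument to compare against. Judged on its own, your proposal is a correct and complete proof, and it is in fact the standard argument used in those references: reduce to a one-step squared-distance decrease, convert the pivot condition $\|p'-v\|\ge\|p-v\|$ into $(p-p')^T(v-p')\ge\tfrac12\delta_i^2$, bound $\|v-p'\|\le\operatorname{diam}(C)\le 2R$, obtain $u_{i+1}\le u_i-u_i^2/(16R^2)$, and telescope $1/u_{i+1}\ge 1/u_i+1/(16R^2)$. Your handling of the clipped case $\alpha=1$ is also right: from $(p-p')^T(v-p')\ge\|v-p'\|^2$ and the expansion of $\|p-p'\|^2$ you get $\delta_i^2\ge\delta_{i+1}^2+\|v-p'\|^2$, and the pivot inequality $\|v-p'\|\ge\|p-v\|=\delta_{i+1}$ then gives $\delta_{i+1}^2\le\tfrac12\delta_i^2$, which is stronger than the recursion since $\delta_i\le R$. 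The witness bound $\delta_*\le\|p'-p\|<2\delta_*$ via the triangle inequality applied at the minimizer $x_*$ is exactly the intended proof.

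One point worth making explicit: in the strict-pivot/ball regime you need the pivot to be the \emph{maximizer} $v_*$ of $(p-p')^Tx$ over $C$, not merely some strict pivot, because the inequality $(p-p')^T(v_*-p)\ge\rho\delta_i$ comes from comparing $v_*$ with the feasible point $p+\rho(p-p')/\|p-p'\|$. An arbitrary strict pivot only guarantees $(p-p')^T(v-p)\ge 0$, which would not produce the $\rho$-dependent contraction. You do say ``the strict pivot $v_*$,'' so this is how you are reading the hypothesis; it matches the way the paper's algorithm actually selects the pivot, but it is a slight strengthening of the literal phrase ``each iteration uses a strict pivot'' in the theorem statement and should be flagged.
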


\begin{remark}
When $p$ is interior to $C$ we can think of the ratio $R/\rho$ as a {\it condition number} for the problem. If this condition number is not large the complexity is only logarithmic in $1/\varepsilon$, hence few iterations will suffice.
\end{remark}
More details on the Triangle Algorithm for GCHM is given in \cite{kalfull, kalcon}. For applications of Triangle Algorithm in optimization, computational geometry, and machine learning, see \cite{AKZ}.

\section{Application of Triangle Algorithm to Linear Systems} \label{sec3}
Given an $m \times n$ real matrix $A$, and $b \in \mathbb{R}^m$, we wish to solve
\begin{equation} \label{eqnn}
Ax = b.
\end{equation}
We may also wish to solve the {\it normal equation}:
\begin{equation} \label{eqnnm}
A^TAx =A^Tb.
\end{equation}
Additionally, it may be desirable to compute the {\it minimum-norm} solution of (\ref{eqnn}) or (\ref{eqnnm}). While (\ref{eqnn}) may be unsolvable, (\ref{eqnnm}) is always solvable. A direct proof of solvability of (\ref{eqnnm}) is as follows: From one of the  many equivalent formulations of the well-known Farkas Lemma,  described in every linear programming book,  it follows that if $A^TAx=A^Tb$ is not solvable there exists $y$ such that $A^TAy=0$, $b^TAy <0$.  But this gives $y^TA^TAy= \Vert A y \Vert ^2=0$. Hence $Ay=0$, contradicting that $b^TAy <0$.

Any solution to the normal equation satisfies the least-squares formula
\begin{equation} \label{deltax}
\Delta= \min \{\Vert Ax - b \Vert : x \in \mathbb{R}^n \}.
\end{equation}
Hence, a solution to the normal equation is a  {\it least-squares} solution. Thus once we have a solution $x$ to the normal equation we can check the solvability of $Ax=b$.  The {\it minimum-norm} least-squares solution, denoted by $x_*$, is the solution to (\ref{deltax}) with minimum norm. It is known that $x_*=V\Sigma^{\dagger}U^Tb$,
where $U \Sigma V^T$ is the singular value decomposition of $A$, and $\Sigma^{\dagger}$ is the pseudo-inverse of $\Sigma$. In particular, if $\sigma_*$ is the least singular value,
\begin{equation} \label{rstar}
\Vert x_* \Vert \leq  \frac{1}{ \sigma_*} \Vert b \Vert.
\end{equation}

In this article we will develop a version of the Triangle Algorithm that can solve the approximate versions of (\ref{eqnn}) or (\ref{eqnnm}) in an interrelated fashion. The algorithm is simple to implement and, as our computational results demonstrate, it works very well in practice.

Given $r >0$, consider the ellipsoid defined as the image of the ball of radius $r$ under the linear map $A$:
\begin{align}
C_{A,r} = \{y = Ax: \|x\| \leq r\}.
\end{align}
We will analyze the Triangle Algorithm  for testing if $b \in C_{A,r}$.

\begin{prop} \label{prop0} Let $p=b$.  Given $x' \in \mathbb{R}^n$, where $\|x'\| \leq r$, let $p' = Ax'$. Assume $p' \not =p$. Let
\begin{equation} \label{c}
c= A^T(p-p')=A^TAx'-A^Tb.
\end{equation}

(1) If $c=0$, $p'$ is witness.

(2) If $c \not =0$, let $v_r=rA{c}/{\|c\|}$. Then
\begin{equation}  \label{vr}
\max\{c^Tx: \|x\| \leq r\} = c^T v_r= r \Vert c \Vert.
\end{equation}

(3)  $v_r$ is a pivot if and only if
\begin{equation} \label{eqlem}
r \Vert c \Vert \geq \frac{1}{2} (\Vert p \Vert^2 - \Vert p' \Vert^2).
\end{equation}

(4) $v_r$ is a strict pivot if and only if
\begin{equation}  \label{eqlem2}
r \Vert c \Vert \geq (p-p')^Tp.
\end{equation}
Furthermore, if $p \in C_{A,r}$, $v_r$ is a strict pivot. On the other hand, if  $v_r$ is not a strict pivot, the orthogonal hyperplane to the line segment  $p'p$, passing to through the nearest point of $v_r$, separates $p$ from $C_{A,r}$.
\end{prop}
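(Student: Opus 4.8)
The plan is to verify each of the four claims by unwinding the definitions from Section~\ref{sec2} (pivot, strict pivot, witness) and specializing the optimization $\max\{c^Tx : x \in C\}$ to the case $C = C_{A,r}$, where $C_{A,r}$ is the image of the Euclidean ball $\{x : \|x\| \le r\}$ under $A$. The central observation is that maximizing $c^Ty$ over $y \in C_{A,r}$ is the same as maximizing $(A^Tc)^Tx$ over $\|x\|\le r$, which by Cauchy--Schwarz (or the known formula for the support function of a ball) is attained at $x = rA^Tc/\|A^Tc\|$ when $A^Tc\ne 0$. But here $c = A^T(p-p')$, so $A^Tc$ should really be read through the substitution that makes the pivot test in~\eqref{pivottest} use the direction $p-p'$; the resulting maximizer in $C_{A,r}$ is $v_r = rAc/\|c\|$ with $c = A^T(p-p')$ as defined in~\eqref{c}, and this is exactly part~(2), giving $\max\{c^Tx : \|x\|\le r\} = (p-p')^Tv_r = r\|c\|$ after noting $c^T$ here is shorthand for $(p-p')^T$ acting on points of $C_{A,r}$. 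I should be careful to keep the two roles of $c$ straight: as written, $c=A^T(p-p')$ lives in $\mathbb{R}^n$, and $c^Tx$ for $x\in\mathbb{R}^n$ equals $(p-p')^TAx$, so the displayed identity~\eqref{vr} is the statement $\max\{(p-p')^TAx : \|x\|\le r\} = r\|c\|$ with the max attained at $x = rc/\|c\|$, i.e.\ at the point $v_r = rAc/\|c\|$ of $C_{A,r}$.

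For part~(1), if $c = A^T(p-p') = 0$, then $(p-p')^Ty = 0$ for every $y = Ax \in C_{A,r}$ (including $y = p' = Ax'$, so $(p-p')^Tp' = 0$), hence $(p-p')^Tv = 0 < \tfrac12(\|p\|^2 - \|p'\|^2)$ for all $v\in C_{A,r}$; the strict inequality holds because $\|p\|^2 - \|p'\|^2 = \|p - p'\|^2 + 2(p-p')^Tp' - 2\|p-p'\|^2 \cdot 0$\,---\,more cleanly, $\|p\|^2-\|p'\|^2 = \|p-p'\|^2 + 2(p-p')^Tp'$ and $(p-p')^Tp' = 0$ gives $\|p\|^2-\|p'\|^2 = \|p-p'\|^2 > 0$ since $p\ne p'$. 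So the pivot inequality~\eqref{pivot} fails for every $v$, meaning no pivot exists and $p'$ is a witness. For parts~(3) and~(4), I would simply substitute $v = v_r$ into the defining inequalities~\eqref{pivot} and~\eqref{pivots}: by part~(2), $(p-p')^Tv_r = r\|c\|$, so~\eqref{pivot} becomes $r\|c\| \ge \tfrac12(\|p\|^2-\|p'\|^2)$, which is~\eqref{eqlem}; and the strict-pivot inequality $(p-p')^T(v_r - p)\ge 0$ becomes $r\|c\| - (p-p')^Tp \ge 0$, i.e.~\eqref{eqlem2}. The only subtlety is to argue that it suffices to test the single point $v_r$: because $v_r$ maximizes $(p-p')^Ty$ over $C_{A,r}$, if~\eqref{pivot} (resp.~\eqref{pivots}) fails at $v_r$ it fails at every $v\in C_{A,r}$, so $v_r$ is a pivot (resp.\ strict pivot) if and only if \emph{some} pivot (resp.\ strict pivot) exists\,---\,this is the same reasoning as the sentence after~\eqref{pivottest} in the excerpt.

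The two remaining assertions in part~(4) then follow quickly. If $p = b \in C_{A,r}$, write $p = Ax_0$ with $\|x_0\|\le r$; then $(p-p')^T(v_r - p) = r\|c\| - (p-p')^Tp = \max\{(p-p')^Ty : y\in C_{A,r}\} - (p-p')^Tp \ge 0$ because $p$ itself is a feasible point of that maximization, so $v_r$ is automatically a strict pivot\,---\,this recovers, in this special setting, the general fact noted after~\eqref{pivots} and in Theorem~\ref{thm1}. Conversely, if $v_r$ is not a strict pivot, then~\eqref{eqlem2} fails, i.e.\ $(p-p')^Ty \le r\|c\| < (p-p')^Tp$ for every $y\in C_{A,r}$, which says precisely that the hyperplane $\{z : (p-p')^Tz = (p-p')^Tp\}$\,---\,the hyperplane through $p$ orthogonal to $p - p'$, equivalently the hyperplane through the nearest point of $p$ on the ray, oriented along $p'p$\,---\,strictly separates $p$ from $C_{A,r}$; I will phrase this to match the paper's wording about ``the orthogonal hyperplane to the line segment $p'p$ passing through the nearest point of $v_r$'' by identifying that nearest point via Proposition~\ref{prop1}. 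I expect the main obstacle to be purely expository rather than mathematical: keeping the notational identification $c^Tx \leftrightarrow (p-p')^TAx$ transparent and making sure the ``nearest point of $v_r$'' is correctly matched to the separating hyperplane described in the statement; the analytic content is just Cauchy--Schwarz plus substitution into two inequalities.
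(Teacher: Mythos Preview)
Your argument is correct and, for parts~(2)--(4), essentially identical to the paper's (which is terse but follows the same substitution of $v_r$ into the defining inequalities~\eqref{pivot} and~\eqref{pivots}, together with the Cauchy--Schwarz identification of the maximizer over the ball). Your care in untangling the notation---that $c\in\mathbb{R}^n$ and $c^Tx = (p-p')^TAx$, so that ``$c^Tv_r$'' in~\eqref{vr} really stands for $(p-p')^Tv_r$---is a genuine clarification the paper glosses over.

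The one place you diverge is part~(1). The paper argues by contradiction through least-squares optimality: $c=0$ means $x'$ solves the normal equation $A^TAx=A^Tb$, hence $p'=Ax'$ minimizes $\|Ax-b\|$ over all $x$; if $p'$ were not a witness there would be a pivot and hence (by the basic Triangle Algorithm step) a point $p''\in C_{A,r}$ strictly closer to $p$, contradicting minimality. Your route is more direct and more geometric: $c=A^T(p-p')=0$ says $p-p'$ is orthogonal to the range of $A$, so $(p-p')^Tv=0$ for every $v\in C_{A,r}$; since in particular $(p-p')^Tp'=0$, one computes $\|p\|^2-\|p'\|^2=\|p-p'\|^2>0$, and the pivot inequality~\eqref{pivot} fails for every $v$. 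Both arguments are short; yours avoids invoking the optimality theory of the normal equations and stays entirely inside the linear-algebraic picture, while the paper's version has the virtue of tying the witness condition directly to the least-squares interpretation that drives the later Theorem~\ref{finalthm}.
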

\begin{proof} (1):  Since $p' \not = p$,  if $p$ is not a witness, from the Triangle Algorithm  there exists a pivot in $C_{A,r}$ and this in turn implies there exists  $p'' \in C_{A,r}$ such that $\Vert p''-p \Vert < \Vert p'-p \Vert$. But since $x'$ is a solution to the normal equation $\Vert p'-p \Vert$ is minimum, a contradiction.

(2): The first equality in (\ref{vr}) follows trivially since the optimization is over a ball. The second equality follows from the definition of $v_r$.

(3) and (4): Considering the definitions of pivot and strict pivot in (\ref{pivots}),  (\ref{eqlem}) and (\ref{eqlem2}) follow. The proof of the remaining part is analogous to proving that a witness induces a separating hyperplane.
\end{proof}

\begin{remark} The computation of a pivot in $C_{A,r}$ can be established efficiently, in $O(mn)$ operations. Geometrically, to find $v_r$, consider the orthogonal hyperplane to line $pp'$, then move the hyperplane from $p'$ toward $p$ until it is tangential to the boundary of the ellipsoid. For illustration, see Figure \ref{AAD}. Before describing  the algorithm to test if $p$ lies in $C_{A,r}$ we state some results.
\end{remark}

\begin{theorem}  \label{new} $Ax=b$ is unsolvable if and only if for each $r>0$, there exists a witness $p' \in C_{A,r}$, dependent on $r$, such that if $p=b$, $c=A^T(p-p')$, then
\begin{equation} \label{infeasibility}
r \Vert c \Vert < \frac{1}{2} (\Vert p \Vert^2 - \Vert p' \Vert^2) \leq (p-p')^Tp.
\end{equation}
\end{theorem}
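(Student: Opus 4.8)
The plan is to deduce the theorem from the Distance Duality Theorem (Theorem~\ref{thm1}) and Proposition~\ref{prop0}, after first recording the elementary fact that $Ax=b$ is unsolvable if and only if $b\notin C_{A,r}$ for every $r>0$. One direction of this fact is immediate: if $Ax_0=b$ then $b=Ax_0\in C_{A,r}$ for $r=\|x_0\|$. For the other, $\bigcup_{r>0}C_{A,r}$ is precisely the range of $A$ (any $y=Ax$ with $x\ne 0$ lies in $C_{A,\|x\|}$, the origin lies in every $C_{A,r}$, and each $C_{A,r}\subseteq\{Ax:x\in\mathbb{R}^n\}$), so $b$ avoiding every $C_{A,r}$ is the same as $Ax=b$ having no solution.

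Next, fix $r>0$, set $p=b$, and apply Theorem~\ref{thm1} to the set $C_{A,r}$, which is the image of a Euclidean ball under a linear map, hence a bounded (indeed compact) convex set equal to its own convex hull, so the GCHM framework of Section~\ref{sec2} applies verbatim. Distance Duality gives $b\notin C_{A,r}$ if and only if there is a witness $p'\in C_{A,r}$; conversely a witness certifies $b\notin C_{A,r}$ (via the proposition stating that a witness induces the orthogonal bisecting separating hyperplane). Combining this with the first paragraph, it remains only to verify that whenever $p'\in C_{A,r}$ is a witness, the displayed chain
\[
r\|c\| \;<\; \tfrac12\bigl(\|p\|^2-\|p'\|^2\bigr) \;\le\; (p-p')^Tp, \qquad c=A^T(p-p'),
\]
holds automatically, and that such a witness exists exactly when $Ax=b$ is unsolvable.

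For the right-hand inequality, a one-line expansion gives $(p-p')^Tp-\tfrac12(\|p\|^2-\|p'\|^2)=\tfrac12\|p-p'\|^2\ge0$, so it always holds, and strictly, since $p'\in C_{A,r}\subseteq\mathrm{range}(A)$ while $p=b\notin\mathrm{range}(A)$ forces $p'\ne p$. For the left-hand inequality, suppose first $c\ne0$: Proposition~\ref{prop0}(2) identifies $\max\{(p-p')^Tv:v\in C_{A,r}\}$ with $r\|c\|$, and since $p'$ is a witness the point $v_r$ cannot be a pivot, so the contrapositive of Proposition~\ref{prop0}(3) yields exactly $r\|c\|<\tfrac12(\|p\|^2-\|p'\|^2)$. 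If instead $c=0$, then $p'$ is already a witness by Proposition~\ref{prop0}(1), and $p-p'$ is orthogonal to the range of $A$, hence to $p'\in\mathrm{range}(A)$, so $\|p\|^2-\|p'\|^2=\|p-p'\|^2>0=r\|c\|$ and the inequality still holds. For the ``only if'' direction of the theorem, when $Ax=b$ is unsolvable the witness is produced by Theorem~\ref{thm1} (or explicitly as $p'=Ax'$ with $x'$ minimizing $\|Ax-b\|$ over $\|x\|\le r$); for the ``if'' direction, the existence of a witness already certifies $b\notin C_{A,r}$ for every $r$, so $Ax=b$ is unsolvable by the first paragraph.

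I do not expect a genuine obstacle. The two points requiring care are the degenerate case $c=0$ and the strictness of the first inequality; both are controlled by the single observation that unsolvability means $b\notin\mathrm{range}(A)$, so $p=b$ is distinct from every point of $C_{A,r}$ and in particular $\|p\|>\|p'\|$ whenever $p-p'$ is orthogonal to $\mathrm{range}(A)$. The remaining bookkeeping is simply to confirm that $C_{A,r}$ meets the hypotheses of Theorem~\ref{thm1}, which it does as noted above.
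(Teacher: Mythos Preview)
Your proof is correct and follows essentially the same route as the paper: both directions go through Distance Duality (Theorem~\ref{thm1}) to equate unsolvability with the existence of a witness in every $C_{A,r}$, then invoke Proposition~\ref{prop0}(3) in contrapositive for the strict inequality and the identity $(p-p')^Tp-\tfrac12(\|p\|^2-\|p'\|^2)=\tfrac12\|p-p'\|^2$ for the second. Your version is in fact more careful than the paper's, which tacitly assumes $c\ne0$ when appealing to (\ref{eqlem}); you close that gap by treating $c=0$ separately via orthogonality of $p-p'$ to $\mathrm{range}(A)$.
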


\begin{proof} Suppose $Ax=b$ is not solvable. Given, $r >0$, $p \not \in C_{A,r}$. Hence, there exists a witness $p' \in C_{A,r}$ and by (\ref{eqlem}) in Proposition \ref{prop0}, the strict inequality in (\ref{infeasibility}) holds. But then the  inequality holds from the fact that $\Vert p - p' \Vert ^2 \geq 0$.
Conversely, if for each $r>0$, there exists $p'$ such that (\ref{infeasibility}) holds, then $p'$ is witness, implying $b \not \in C_{A,r}$. Hence, $Ax=b$ is unsolvable.
\end{proof}

\begin{cor} \label{cor1} Suppose $p=b \not \in C_{A,r}$. Then there exists $p' \in C_{A,r}$ such that $p \not \in C_{A,r'}$, for all $r'$ satisfying
\begin{equation}
r \leq  r' < \frac{(p-p')^Tp}{ \Vert c \Vert}.
\end{equation}
\end{cor}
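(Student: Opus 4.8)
The plan is to exhibit a single iterate $p' \in C_{A,r}$ that simultaneously certifies $p \notin C_{A,r'}$ for every admissible $r'$, exploiting that the separation vector $c = A^T(p - p')$ of (\ref{c}) depends only on $p$ and $p'$ and not on the radius.

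First I would produce the witness. Since $p = b \notin C_{A,r}$ and $C_{A,r}$ is compact and convex, the Distance Duality theorem (Theorem \ref{thm1}) supplies a witness $p' \in C_{A,r}$, necessarily distinct from $p$. Set $c = A^T(p-p')$. I would dispose of the degenerate case $c = 0$ first: then $p - p'$ is orthogonal to the range of $A$ while $p'$ lies in that range, so $(p - p')^T y = c^T x = 0$ for every $y = Ax \in C_{A,r'}$, whereas $(p-p')^T p = \|p - p'\|^2 > 0$; hence $p \notin C_{A,r'}$ for all $r' > 0$ and the threshold $(p-p')^T p / \|c\|$ is to be read as $+\infty$. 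So from now on $c \neq 0$. Because $p'$ is a witness, the maximizer $v_r = rAc/\|c\|$ from Proposition \ref{prop0} fails to be a pivot, so (\ref{eqlem}) combined with the identity $(p-p')^T p - \tfrac12(\|p\|^2 - \|p'\|^2) = \tfrac12\|p - p'\|^2 \ge 0$ gives
\[
r\|c\| < \tfrac12\big(\|p\|^2 - \|p'\|^2\big) \le (p - p')^T p ,
\]
so $r < (p-p')^T p/\|c\|$ and the interval in the statement is nonempty.

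Next I would propagate this to every larger radius. Fix $r'$ with $r \le r' < (p-p')^T p / \|c\|$. Since $p' = Ax'$ with $\|x'\| \le r \le r'$, we have $p' \in C_{A,r'}$ and $p' \neq p$, so $p'$ is a legitimate iterate of $C_{A,r'}$. Now apply Proposition \ref{prop0} with radius $r'$ and iterate $p'$: the associated vector is again $A^T(p - p') = c$, the maximizer is $v_{r'} = r'Ac/\|c\|$, and by (\ref{eqlem2}) it is a strict pivot if and only if $r'\|c\| \ge (p - p')^T p$. The choice of $r'$ makes $r'\|c\| < (p-p')^T p$, so $v_{r'}$ is not a strict pivot; the final assertion of Proposition \ref{prop0}(4) then provides a hyperplane --- orthogonal to the segment $p'p$ and passing through the nearest point of $v_{r'}$ --- that separates $p$ from $C_{A,r'}$. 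Thus $p \notin C_{A,r'}$, and since $r'$ ranged over the whole interval, the corollary follows.

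I expect no serious obstacle: the argument reduces to the observation that $c = A^T(p-p')$ is radius-independent, so a witness for $C_{A,r}$ keeps exposing $p$ until the radius reaches the value at which $v_{r'}$ first turns into a strict pivot, namely $(p-p')^T p / \|c\|$. The only points needing a little care are the degenerate case $c = 0$, handled above, and checking that $p'$ genuinely lies in $C_{A,r'}$ for all $r' \ge r$, which is immediate from $C_{A,r} \subseteq C_{A,r'}$.
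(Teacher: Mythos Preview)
Your argument is correct and is essentially the detailed version of what the paper's one-line proof (``follows immediately from the previous theorem'') is gesturing at: both rest on the fact that the witness $p'$ furnished by Distance Duality keeps certifying $p\notin C_{A,r'}$ as long as the strict-pivot inequality $r'\|c\|\ge (p-p')^Tp$ of Proposition~\ref{prop0}(4) fails. Your treatment is in fact more careful than the paper's, since you explicitly invoke Proposition~\ref{prop0}(4) for the separating hyperplane and dispose of the degenerate case $c=0$.
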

\begin{proof}  The result follows immediately from the previous theorem.
\end{proof}

\begin{remark} Corollary \ref{cor1} suggests a witness in
$C_{A,r}$ can be used to compute a lower bound to the norm of the minimum-norm solution to $Ax=b$, hence a lower bound to $\Vert x_* \Vert$ (see (\ref{rstar})).
\end{remark}

\subsection{Triangle Algorithm for Testing if $b$ Lies in the Ellipsoid $C_{A,r}$} \label{subsec3.1}

Algorithm \ref{alg1} below describes the Triangle Algorithm for testing if $b$ lies in $C_{A,r}$.

\begin{algorithm}[!htb] \label{alg1}
\SetAlgoNoLine
\KwIn{$m \times n$ matrix $A$, $b \in \mathbb{R}^m$, $b \not =0$, $r>0$, $\varepsilon \in (0, 1)$.}
 $p \gets b$, $x' \gets 0$, $p' \gets 0$ \\
 \While{$\|p - p'\| > \varepsilon$ and $c=A^T(p-p') \not =0$}{ $v_r=
A\frac{c}{\|c\|}$. \\
  \lIf{$ r \Vert c \Vert \geq \frac{1}{2} (\Vert p \Vert ^2 - \Vert p' \Vert^2)$} { $\alpha = \min \{ 1, (p - p')^T(v_r - p')/\|v_r - p'\|^2\}$, \quad $p' \gets (1-\alpha) p' + \alpha v_r$, \quad $x' \gets (1-\alpha) x' +  \alpha \frac{rc}{\Vert c \Vert}$}
   \Else{{STOP}}}
    \caption{Testing if $b$ lies in $C_{A,r}$}
\end{algorithm}

Taking into account that when $p \in C_{A,r}$, if $v_r$ is a pivot it is also a strict pivot, as well as using Proposition \ref{prop0} and the bound, a restatement of Theorem \ref{thm2}  for Algorithm \ref{alg1} that tests if $p=b \in C_{A,r}$ for a given $r$ is the following:
\begin{equation}
\max \{ \Vert Ax-b \Vert : \Vert x \Vert \leq r\} \leq 2\max \{ \Vert Ax\Vert : \Vert x \Vert \leq r\}= 2r \Vert A \Vert.
\end{equation}

\begin{theorem} \label{thm2p} {\rm {(Complexity Bounds)}}
Algorithm \ref{alg1} terminates with  $p' \in C_{A,r}$ such that one of the following conditions is satisfied and where each iteration takes $O(mn)$ operations:

(i) $\|p - p'\| \leq \varepsilon$ and the number of iterations is $O(r^2 \Vert A \Vert^2/\varepsilon^2)$. If $p$ is the center of ball of radius $\rho$ in the relative interior of $C_{A,r}$, the number of iterations is $O((r \Vert A \Vert/\rho)^2 \ln{1 / \varepsilon})$.

(ii) $p'$ is witness and the number of iterations is $O(r^2 \Vert A \Vert^2/\delta_r^2)$, where $\delta_r = \min \{\|Ax - p\| :  \Vert x \Vert \leq r\}$. Moreover,
\begin{equation} \label{deltabound}
\delta_r \leq \|p - p'\| \leq 2\delta_r. \qed
\end{equation}

\end{theorem}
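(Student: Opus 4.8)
The plan is to derive Theorem \ref{thm2p} as a direct specialization of the general complexity result for the Triangle Algorithm, Theorem \ref{thm2}, applied to the convex set $C = C_{A,r}$ with distinguished point $p = b$. First I would observe that $C_{A,r} = \{Ax : \|x\| \le r\}$ is a bounded convex set (the image of a Euclidean ball under a linear map), so the GCHM framework of Section \ref{sec2} applies verbatim, and Algorithm \ref{alg1} is precisely the Triangle Algorithm instantiated on this set: the pivot search $\max\{c^Tx : x \in C_{A,r}\}$ is solved in closed form by $v_r = rAc/\|c\|$ via Proposition \ref{prop0}(2), and the witness/pivot dichotomy is exactly the test in (\ref{eqlem}). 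The $O(mn)$ per-iteration cost then follows by inspection of the loop body: forming $c = A^T(p-p')$ is a matrix-vector product ($O(mn)$), forming $v_r = Ac/\|c\|$ is another ($O(mn)$), and the scalar $\alpha$ and the updates of $p'$ and $x'$ are $O(m)$ and $O(n)$ respectively.

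Next I would translate the geometric quantities in Theorem \ref{thm2} into the quantities appearing in Theorem \ref{thm2p}. The key identification is the diameter-type quantity
\begin{equation}
R = \max\{\|x - p\| : x \in C_{A,r}\} = \max\{\|Ax - b\| : \|x\| \le r\}.
\end{equation}
Here I would invoke the bound already stated in the excerpt just before the theorem, namely $\max\{\|Ax - b\| : \|x\| \le r\} \le 2\max\{\|Ax\| : \|x\| \le r\} = 2r\|A\|$, which comes from the triangle inequality together with the fact that $b = A(0) \in C_{A,r}$ when $Ax=b$ is relevant, or more simply from $\|Ax - b\| \le \|Ax\| + \|b\| \le \|Ax\| + \max\{\|Ay\|:\|y\|\le r\}$ once $b$ is within the ellipsoid; in any case $R = O(r\|A\|)$. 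Substituting $R = O(r\|A\|)$ into the two cases of Theorem \ref{thm2}(i) gives the claimed $O(r^2\|A\|^2/\varepsilon^2)$ general bound and the $O((r\|A\|/\rho)^2 \ln(1/\varepsilon))$ bound in the ball-interior case, where the strict-pivot hypothesis needed for the logarithmic rate is automatically satisfied because, by Proposition \ref{prop0}(4), whenever $p \in C_{A,r}$ every pivot $v_r$ produced by the algorithm is in fact a strict pivot.

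For case (ii), I would set $\delta_* = \min\{\|x - p\| : x \in C_{A,r}\} = \min\{\|Ax - b\| : \|x\| \le r\} = \delta_r$, matching the definition in the statement, and then the bound $O(R^2/\delta_*^2) = O(r^2\|A\|^2/\delta_r^2)$ together with the sandwich $\delta_r \le \|p - p'\| \le 2\delta_r$ are immediate transcriptions of Theorem \ref{thm2}(ii). The only point requiring a sentence of care is that the termination of Algorithm \ref{alg1} genuinely corresponds to the two cases of the abstract theorem: the loop exits either because $\|p - p'\| \le \varepsilon$ (case (i)) or because the pivot test (\ref{eqlem}) fails, which by Proposition \ref{prop0} and Theorem \ref{new} means $p'$ is a witness (case (ii)); the possibility $c = 0$ is subsumed, since by Proposition \ref{prop0}(1) that too makes $p'$ a witness.

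I expect the main (and really the only) obstacle to be purely expository rather than mathematical: making the reduction airtight requires confirming that Algorithm \ref{alg1} is a faithful instance of the generic Triangle Algorithm — in particular that the closed-form pivot $v_r$ plays the role of the optimal pivot $v_*$ in (\ref{pivottest}) and that the $x'$-bookkeeping does not alter the iterate sequence $\{p'\}$ — after which every bound is obtained by the single substitution $R \mapsto 2r\|A\|$, $\delta_* \mapsto \delta_r$ in Theorem \ref{thm2}. No new estimate needs to be proved.
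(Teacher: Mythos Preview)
Your proposal is correct and matches the paper's approach exactly: the paper presents Theorem~\ref{thm2p} as ``a restatement of Theorem~\ref{thm2}'' obtained by invoking Proposition~\ref{prop0}, the observation that $v_r$ is automatically a strict pivot when $p\in C_{A,r}$, and the bound $\max\{\|Ax-b\|:\|x\|\le r\}\le 2r\|A\|$ for $R$. Your write-up is in fact more detailed than the paper's (you spell out the $O(mn)$ arithmetic, the identification $\delta_*=\delta_r$, and the termination dichotomy), but the underlying argument is identical.
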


When $p \in C_{A,r}$ the radius $r$ plays a role in the complexity of the algorithm. This is shown in Figure \ref{tfig:1}.  Figure \ref{AAD} shows one iteration of the algorithm. If for some $x$ with $\Vert x \Vert \leq r$ we have $Ax=b$, then the final $x'$ is an approximate solution. Otherwise, it follows that $Ax=b$ is not solvable when $\Vert x \Vert \leq r$.

\begin{figure}[htpb]
	\centering
	\subfloat[]{
	\raisebox{6.5ex}{
	\begin{tikzpicture}[scale=1]
	\begin{scope}
	\begin{scope}
	[rotate=47.5829]
	\draw  \boundellipse{0,0}{2.810}{1.210};
	\filldraw (0,0) circle (.5pt) node[below] {$O$};
	\end{scope}
	\filldraw (1.2,1.9) circle (.5pt)  node[below] {$p$};
	
	\begin{scope}
	[rotate=0]
	\draw  \boundellipse{1.2,1.9}{0.31}{0.31};
	\end{scope}
	
	\end{scope}
	
	\filldraw (0,0) circle (.5pt);	
	
	\end{tikzpicture}}}
	~
	\subfloat[]{
	\begin{tikzpicture}[scale=1]
	\begin{scope}
	\begin{scope}
	[rotate=47.5829]
	\draw  \boundellipse{0,0}{4.215}{1.815};
	\filldraw (0,0) circle (.5pt) node[below] {$O$};
	\end{scope}
	\filldraw (1.2,1.9) circle (.5pt)  node[below] {$p$};
	
	\begin{scope}
	[rotate=0]
	\draw  \boundellipse{1.2,1.9}{1.1}{1.1};
	\end{scope}
	
	\end{scope}
	
	\filldraw (0,0) circle (.5pt);	
	
	\end{tikzpicture}}
	
	\caption{The ellipsoid $C_{A,r}$ contains $p$ in its interior.  However, by increasing the radius $r$ to $2r$ the condition number of the problem improves significantly.}
	\label{tfig:1}
\end{figure}
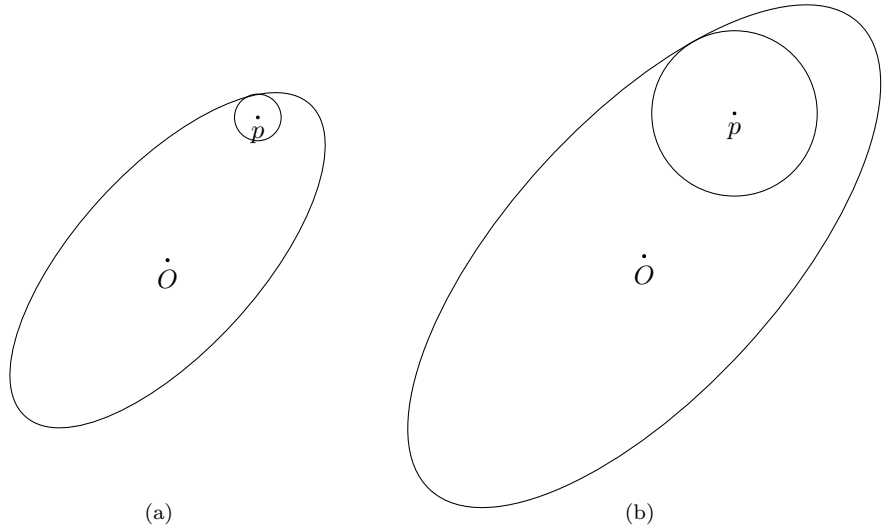

\begin{figure}[htpb]
	\centering
	\subfloat[] {
	\raisebox{10ex}{
	\begin{tikzpicture}[scale=.7]
	\begin{scope}
	\begin{scope}
	[rotate=47.5829]
	\draw \boundellipse{(0,0}{2.810}{1.210};
	\filldraw (0,0) circle (.5pt) node[below] {$O$};
	\end{scope}
	\filldraw (0,2.79) circle (.5pt)  node[above] {$p$};
	\filldraw (0.9815,2.158)  circle (.5pt)  node[above] {$v_r$};
	\filldraw (0.6580,0.75)  circle (.5pt)  node[left] {$p'$};
	\draw[dashed] (-1.37,1.4)--(2.69,2.709);
	\draw (0.6580,0.75)--(0.9815,2.158)--(0,2.79)--(0.6580,0.75);
	\end{scope}
	\label{1st}
	\end{tikzpicture}}}
	\qquad
	\subfloat[] {
	\begin{tikzpicture}[scale=.7]
	\begin{scope}
	\begin{scope}
	[rotate=47.5829]
	\draw  \boundellipse{(0,0}{5.620}{2.420};
	\filldraw (0,0) circle (.5pt) node[below] {$O$};
	\end{scope}
	\filldraw (0,2.79) circle (.5pt)  node[below] {$p$};
	\filldraw (1.963,4.315)  circle (.5pt)  node[above] {$v_r$};
	\filldraw (1.234,2.336)  circle (.5pt)  node[right] {$p''$};
	\filldraw (0.6580,0.75)  circle (.5pt)  node[left] {$p'$};
	\draw[dashed] (-1.34,3.25)--(3.9,4.94);
	\draw (1.963,4.315)--(1.234,2.336)--(0.6580,0.75)--(0,2.79)--(1.963,4.315);
	\draw (0,2.79)--(1.234,2.336);
	\end{scope}
	
	\end{tikzpicture}
	\label{2nd}
	}
	
	\caption{In the left-hand-side Figure \ref{1st} $v_r$ is not a strict pivot, proving $p$ is exterior to the ellipsoid. However, Algorithm \ref{alg1} terminates when $p'$ is a witness with a better estimate of the distance from $p$ to $C_{A,r}$.
In Figure \ref{2nd}  $p'$ admits a pivot $v_r$, used to compute the next iterate $p''$  as the nearest point to $p$ on the line segment $p'v_r$. The pivot $v_r$ is found by moving the orthogonal hyperplane to line $pp'$ (dashed line) in the direction from $p'$ to $p$ until the hyperplane is tangential to the ellipsoid.}
	\label{AAD}
\end{figure}
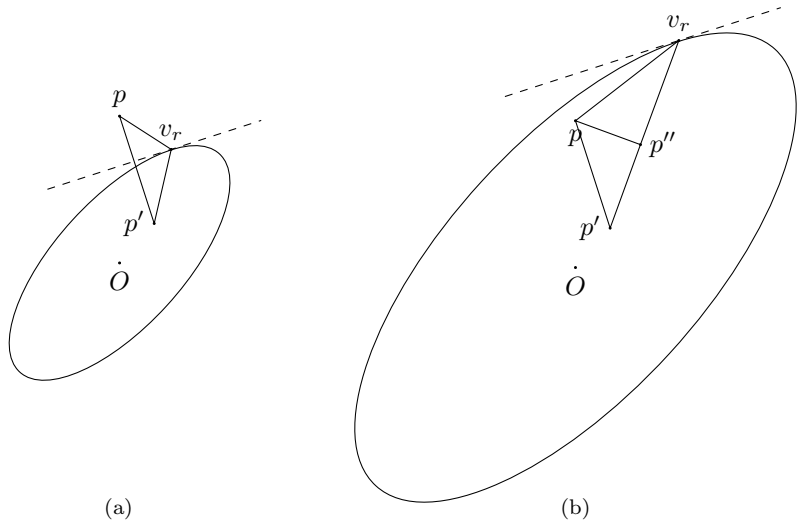

\subsection{Triangle Algorithm for Testing Solvability of $Ax=b$} \label{subsec3.2}
Given a linear system $Ax=b$, to test if it is solvable  we need a more elaborate algorithm than Algorithm \ref{alg1}. The algorithm below computes an $\varepsilon$-{\it approximate} solution $x_\varepsilon$ satisfying  $\Vert A x_\varepsilon - b \Vert \leq \varepsilon$,  when such a solution exists.  Given an initial radius $r_0>0$, it checks if an approximate solution exists in $C_{A,r_0}$. When a  pivot does not exists, it at least doubles the radius $r_0$ (see Corollary \ref{cor1}) and repeats the process.

\begin{algorithm}[!htb] \label{alg2}
\SetAlgoNoLine
\KwIn{$m \times n$ matrix $A$, $b \in \mathbb{R}^m$, $b \not =0$,  $r_0>0$, $\varepsilon
\in (0, 1)$.}
  $p \gets b$, $x' \gets 0$,  $p' \gets  0$, $r \gets r_0$ \\
  \While{$\|p - p'\| > \varepsilon$ and $c=A^T(p-p') \not =0$}{$v_r=
A\frac{c}{\|c\|}$. \\
  \lIf{$ r \Vert c \Vert \geq \frac{1}{2} ( \Vert p \Vert^2 - \Vert p' \Vert^2)$} { $\alpha = min \{1, (p - p')^T(v_r - p')/\|v_r - p'\|^2\}$, \quad $p' \gets (1-\alpha) p' + \alpha v_r$, \quad $x' \gets (1-\alpha) x' +  \alpha \frac{rc}{\Vert c \Vert}$}
   \Else{{$r \gets \max \{\frac{(p-p')^Tp}{||c||}, 2r\}$}}
   }
  \caption{Triangle Algorithm for Solving $Ax = b$}
\end{algorithm}

\subsection{Properties of Algorithm \ref{alg2}} \label{subsec3.3}
Here we prove properties of Algorithm \ref{alg2} showing that while it is designed to solve $Ax=b$, it can also be used to solve the normal equation and detect the unsolvability of $Ax=b$.

\begin{theorem}  \label{finalthm} Algorithm \ref{alg2} satisfies the following properties:

(i)  If $Ax=b$ is solvable, given any $r \geq \Vert x_* \Vert$, Algorithm \ref{alg2} computes  $x'$, $\Vert x' \Vert \leq r$ such that
\begin{equation} \label{eqaxb}
\Vert A x' - b \Vert \leq \varepsilon.
\end{equation}
Furthermore, if $b \not \in C_{A,r_0}$,
\begin{equation} \label{eqaxb1}
\Vert x'  \Vert \leq 2 \Vert x_* \Vert.
\end{equation}

(ii)  Let  $r_\varepsilon =  {\Vert b \Vert^2}/{\varepsilon}$.  Given any $r \geq r_\varepsilon$, Algorithm \ref{alg2} computes  $x'$, $\Vert x' \Vert \leq r_\varepsilon$ such that it either satisfies (\ref{eqaxb}), or it satisfies (\ref{eqi}):
\begin{equation}  \label{eqi}
\Vert  A^TAx' - A^Tb\Vert  \leq \varepsilon.
\end{equation}

(iii) Let $r'_\varepsilon= (\Vert b \Vert / \varepsilon) \max  \big \{\Vert b \Vert,  {2}/{\sigma_*} \big \}$. Given any $r \geq r'_\varepsilon$, Algorithm \ref{alg2} computes $x'$ with $\Vert x' \Vert \leq r_\varepsilon$ such that it either satisfies (\ref{eqaxb}) or it satisfies (\ref{eqii}):
\begin{equation} \label{eqii}
\Vert A^TAx'- A^Tb \Vert \leq \varepsilon, \quad \bigg | (p-p')^Tp  - \Vert p - p' \Vert^2 \bigg | \leq \varepsilon, \quad
4 \Delta^2  \geq  (p-p')^Tp  - \varepsilon.
\end{equation}
In particular, if $(p-p')^Tp \geq 2 \varepsilon$,  then $\Delta \geq \sqrt{\varepsilon}/2$ (hence $Ax=b$ is unsolvable).
\end{theorem}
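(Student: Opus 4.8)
The plan is to establish the three parts of Theorem~\ref{finalthm} by combining the complexity bounds of Theorem~\ref{thm2p} with the geometric duality facts in Proposition~\ref{prop0} and Corollary~\ref{cor1}, each time choosing the radius $r$ large enough that the relevant quantity ($b$, $A^TAx'-A^Tb$, or $\Delta$) is forced into the $\varepsilon$-range. For part (i), if $Ax=b$ is solvable then $b\in C_{A,r}$ for every $r\geq\Vert x_*\Vert$, since $x_*$ itself has norm at most $r$; hence by Theorem~\ref{thm2p}(i) the algorithm cannot stop in the witness branch and must terminate with $\Vert p-p'\Vert\leq\varepsilon$, i.e.\ with $x'$ satisfying~(\ref{eqaxb}). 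The norm bound~(\ref{eqaxb1}) is where Corollary~\ref{cor1} enters: if $b\not\in C_{A,r_0}$, the first radius update produces a witness-driven lower bound $r'\geq (p-p')^Tp/\Vert c\Vert$ on the norm of any solution, and one checks this lower bound is itself at most $\Vert x_*\Vert$; since the algorithm at most doubles, the final working radius is within a factor $2$ of $\Vert x_*\Vert$, and $x'$ lives in that ball. I would need to be a little careful that the ``$\max\{\cdot,2r\}$'' rule never overshoots $2\Vert x_*\Vert$ before $b$ first lands inside $C_{A,r}$ — this is the one spot requiring an explicit inductive argument on the sequence of radii.

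For part (ii), the point is that even when $Ax=b$ is unsolvable the algorithm still halts: either it reaches $\Vert p-p'\Vert\leq\varepsilon$ (giving~(\ref{eqaxb})), or it halts in the STOP/witness branch. In the witness case, $x'$ solves the normal equation in the limit, and I need to convert the witness inequality into the bound~(\ref{eqi}). The key estimate is that $\Vert c\Vert=\Vert A^TAx'-A^Tb\Vert=\Vert A^T(p-p')\Vert$, and when $p'$ is a witness for $C_{A,r}$ we have $r\Vert c\Vert<\tfrac12(\Vert p\Vert^2-\Vert p'\Vert^2)\leq\tfrac12\Vert p\Vert^2=\tfrac12\Vert b\Vert^2$; choosing $r\geq r_\varepsilon=\Vert b\Vert^2/\varepsilon$ immediately forces $\Vert c\Vert<\varepsilon$, which is~(\ref{eqi}). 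I also want the norm control $\Vert x'\Vert\leq r_\varepsilon$: the iterate $x'$ is always a convex combination of points of norm $r$ or less along the way, and since the radius only changes in the STOP branch (not while building $x'$), $\Vert x'\Vert$ stays bounded by the largest radius actually used, which I will argue is at most $r_\varepsilon$ once we start from below it. The main obstacle here is bookkeeping: tracking which radius is ``in force'' when the final $x'$ is recorded, versus the radius $r$ supplied in the hypothesis.

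For part (iii), I would run the same dichotomy but now extract the sharper conclusions available at a witness. Starting from $r\Vert c\Vert<\tfrac12(\Vert p\Vert^2-\Vert p'\Vert^2)\leq(p-p')^Tp$ (the chain in~(\ref{infeasibility})), the first inequality in~(\ref{eqii}) is just~(\ref{eqi}) again once $r\geq\Vert b\Vert^2/\varepsilon$. For the middle inequality, note $(p-p')^Tp-\Vert p-p'\Vert^2=(p-p')^Tp'=(x')^T A^T(p-p')=(x')^T c$, whose absolute value is at most $\Vert x'\Vert\,\Vert c\Vert\leq r_\varepsilon\cdot\varepsilon/r$; the factor $r'_\varepsilon\geq(2\Vert b\Vert)/(\varepsilon\sigma_*)$ combined with $\Vert x'\Vert\leq\Vert b\Vert/\sigma_*$-type control is exactly what makes this product at most $\varepsilon$. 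For the last inequality, $\Delta^2=\min\Vert Ax-b\Vert^2$ and at the witness $\Vert p-p'\Vert^2\leq 4\delta_r^2$ by~(\ref{deltabound}) with $\delta_r\to\Delta$ as $r\to\infty$, while $\delta_r\geq\Delta$ always; combining $\Vert p-p'\Vert^2\leq 4\Delta^2$ with the middle inequality $\Vert p-p'\Vert^2\geq(p-p')^Tp-\varepsilon$ yields $4\Delta^2\geq(p-p')^Tp-\varepsilon$. The ``in particular'' clause is then immediate: $(p-p')^Tp\geq 2\varepsilon$ gives $4\Delta^2\geq\varepsilon$, i.e.\ $\Delta\geq\sqrt{\varepsilon}/2>0$, so no exact solution exists. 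The hard part throughout is not any single inequality but making the choice of $r'_\varepsilon$ simultaneously dominate all three requirements — the $\Vert b\Vert^2/\varepsilon$ term for~(\ref{eqi}) and the $2/(\varepsilon\sigma_*)$ term for the $\Vert x'\Vert\Vert c\Vert$ product — which is precisely why $r'_\varepsilon$ is written as a max of two expressions scaled by $\Vert b\Vert/\varepsilon$.
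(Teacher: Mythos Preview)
Your overall structure matches the paper's proof: parts (i) and (ii) are essentially the same argument (the paper bounds $r\Vert c\Vert$ via the strict-pivot inequality and Cauchy--Schwarz, $r\Vert c\Vert < (p-p')^Tp \leq \Vert p-p'\Vert\,\Vert b\Vert \leq \Vert b\Vert^2$, rather than your $r\Vert c\Vert < \tfrac12\Vert b\Vert^2$, but either route gives~(\ref{eqi}) once $r\geq r_\varepsilon$).

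There is, however, a genuine gap in your treatment of the middle inequality in part~(iii). You write $(p-p')^Tp' = (x')^Tc$ using the \emph{iterate} $x'$ and then appeal to ``$\Vert x'\Vert\leq\Vert b\Vert/\sigma_*$-type control.'' But the iterate $x'$ is a convex combination of vectors $rc/\Vert c\Vert$ of norm $r$, so in general one only has $\Vert x'\Vert\leq r$, which may be as large as $r'_\varepsilon$; with that bound the product $\Vert x'\Vert\,\Vert c\Vert$ is of order $\Vert b\Vert^2$, not $\varepsilon$. The paper's move is to note that $p'^T(p-p')$ does not depend on which preimage of $p'$ one uses, and to switch to the \emph{minimum-norm} preimage $x'_*$ of $p'$: then $\Vert x'_*\Vert\leq\Vert p'\Vert/\sigma_*\leq 2\Vert b\Vert/\sigma_*$ (since $\Vert p'\Vert\leq\Vert p'-p\Vert+\Vert p\Vert\leq 2\Vert b\Vert$), giving $|p'^T(p-p')|=|(x'_*)^Tc|\leq (2\Vert b\Vert/\sigma_*)\Vert c\Vert$, which is $\leq\varepsilon$ as soon as $\Vert c\Vert\leq\sigma_*\varepsilon/(2\Vert b\Vert)$, i.e.\ once $r\geq r'_\varepsilon$. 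This is the substantive step your sketch is missing.

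Your argument for the last inequality in~(iii) also needs a correction: from $\Vert p-p'\Vert\leq 2\delta_r$ and $\delta_r\geq\Delta$ you cannot deduce $\Vert p-p'\Vert\leq 2\Delta$ --- the inequality points the wrong way. What is needed is $\delta_r=\Delta$ at the specific radius in question, and this holds not ``in the limit'' but because $\Vert x_*\Vert\leq\Vert b\Vert/\sigma_*\leq r'_\varepsilon\leq r$, so $x_*$ already lies in the ball of radius $r$ and hence $\delta_r\leq\Vert Ax_*-b\Vert=\Delta$. With that in place, $\Vert p-p'\Vert^2\leq 4\Delta^2$ combines with $\Vert p-p'\Vert^2\geq (p-p')^Tp-\varepsilon$ exactly as you wrote.
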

\begin{proof}

Proof of (i): In this case $Ax_*=b$. Hence for any $r \geq \Vert x_* \Vert$,
$b \in C_{A,r}$. Proof of (\ref{eqaxb1}) follows from the way the value of $r$ is increased in the algorithm each time a witness is encountered.

Proof of (ii):  Suppose for a given $r >0$, $b=p \not \in C_{A,r}$. There exists   $p' \in C_{A,r}$ that does not admit a strict pivot. From Proposition  \ref{prop0} and Cauchy Schwarz inequality,
\begin{equation} \label{bound1}
r \Vert c \Vert < (p-p')^T p  \leq \Vert p - p' \Vert \cdot \Vert b \Vert.
\end{equation}
Given any $r \geq r_0$ in Algorithm \ref{alg2}, $\Vert p - p' \Vert$ is bounded above by the initial gap.  Since initially $p'=0$, $\Vert p -p' \Vert \leq \Vert b \Vert $. Substituting in (\ref{bound1}), $r \Vert c \Vert \leq \Vert b \Vert ^2$. This proves the choice of $r_\varepsilon$  gives (\ref{eqi}).

Proof of (iii):  Given any $p'=Ax'\in C_{A,r}$, using $\Vert p - p' \Vert^2= (p-p')^T(p-p')$ we have,
\begin{equation}
(p-p')^Tp - \Vert p - p' \Vert^2= p'^T(p-p').
\end{equation}
In particular,
\begin{equation} \label{eqiiia}
\bigg | (p-p')^Tp - \Vert p- p' \Vert^2 \bigg | \leq \Vert p'^T(p-p') \Vert.
\end{equation}
Let  $x_*'$ be the minimum-norm solution to $Ax'=p'$. Applying the bound on the minimum-norm solution of this linear system (see (\ref{rstar})) we get,  $\Vert x'_* \Vert \leq \Vert p' \Vert/\sigma_*$. Since $ \Vert p - p' \Vert \leq \Vert b \Vert$, we get
$ \Vert p' \Vert \leq \Vert b \Vert + \Vert b \Vert = 2\Vert b \Vert$.  Hence,
$\Vert x'_* \Vert \leq 2\Vert b \Vert/\sigma_*$. Using this bound and that
$p'^T(p-p')=x_*'^T A^T(p-p')$, from (\ref{eqiiia}) we get
\begin{equation} \label{eqiiib}
\bigg | (p-p')^Tp - \Vert p- p' \Vert^2 \bigg | \leq \Vert x_*' \Vert \cdot \Vert A^T(p-p') \Vert \leq \frac{2 \Vert b \Vert}{\sigma_*} \Vert c \Vert.
\end{equation}
To get the right-hand-side of the above to be less than or equal to $\varepsilon$ it suffices to get $\Vert c \Vert \leq {\sigma_*} \varepsilon/ {2 \Vert b \Vert}$.
From (ii) it follows that when $r \geq r'_\varepsilon$, $\Vert c \Vert$ is properly bounded and hence the first and second inequalities in (\ref{eqii}) are satisfied. Finally, we prove the last inequality in (\ref{eqii}).  From the bound on $x_*$ in (\ref{rstar})  and the definition of
$r'_\varepsilon$ it follows that  $x_* \in C_{A, r'_\varepsilon}$.  Since $p'$ is a witness, it follows from (\ref{deltabound}) in Theorem \ref{thm2p} that
\begin{equation} \label{eqx1}
\Vert p - p' \Vert \leq 2 \Delta = 2 \Vert p - Ax_* \Vert.
\end{equation}
From the first two inequalities in (\ref{eqii}) we may write
\begin{equation}  \label{eqx2}
\Vert p - p' \Vert^2 \geq  (p-p')^Tp  - \varepsilon.
\end{equation}
From (\ref{eqx1}) and (\ref{eqx2}) we have proved the lower bound on $\Delta$.
\end{proof}

\begin{remark}  Some implications of Theorem \ref{finalthm} regarding Algorithm \ref{alg2} are:

(1) Once we have computed an $\varepsilon$-approximate solution to $Ax =b$ in $C_{A,r}$, by using binary search we can compute an approximate solution whose norm is arbitrarily close to $r_*= \Vert x_* \Vert$.

(2) If $Ax=b$ does not admit an $\varepsilon$-approximate solution, Algorithm \ref{alg2} does not  terminate. Hence in practice we have to modify the algorithm so it terminates. A simple modification is to terminate when $r$ exceeds a certain bound.  However, by part (ii) of Theorem \ref{finalthm}, each time the algorithm  computes a point $p' \in C_{A,r}$ which does not admit a strict pivot we can check if $\Vert c \Vert \leq  \varepsilon $.  When $r$ is sufficiently large such $c$ will be at hand. In other words, we can use  $r_\varepsilon$  as an upper bound on $r$ for termination of Algorithm \ref{alg2}.  Then, by part (ii) the algorithm either gives an $\varepsilon$-approximate solution of $Ax=b$ or such approximation to the normal equation $A^TAx=A^Tb$.

(3) If $Ax=b$ does not admit an $\varepsilon$-approximate solution, part (iii) implies there exists $r$ such that both $\Vert c \Vert \leq \varepsilon$ and
$\big | (p-p')^Tp  - \Vert p - p' \Vert^2 \big | \leq \varepsilon$. When this happens  and $(p-p')^Tp$ is not too small, we can terminate the algorithm with the assurance that $Ax=b$ does not admit an $\varepsilon$-approximate solution, or even the assurance that it is unsolvable.

In summary, despite its simplicity, Algorithm \ref{alg2} is not only capable of computing an $\varepsilon$-approximate solution  to $Ax=b$ when such solution exists, but with simple modifications it can compute an $\varepsilon$-approximate solution to the normal equation. Also, it can compute such an approximate solution and also detect unsolvability of $Ax=b$, even place a lower bound on $\Delta$.  Additionally, the algorithm can compute an approximate solution $x'$ to $x_*$, the minimum-norm least-squares solution, where $\Vert x'\Vert$ is as close to $\Vert x_* \Vert$ as desired.   Finally, Algorithm \ref{alg2} can be modified in different ways, e.g. using strict pivots rather than just pivot or alternating between the two and only use a pivot in applications that may need it, e.g. in proving part (iii) of Theorem \ref{finalthm}.
\end{remark}

\section{Computational Results} \label{sec4}
In this section, we present computational results on the Triangle Algorithm, as well as compare the algorithm to the widely used state-of-the-art algorithm BiCGSTAB for solving square linear systems $Ax=b$. The dimension of the matrix $A$ ranges between $100$ and $2000$. We compare both algorithms in three different settings: general, low-rank, and ill-conditioned linear systems explained below.
\begin{itemize}
\item \textbf{General Linear Systems:} In the general random linear system setting, the matrix $A$ is entry-wise randomly generated using two different distributions: Uniform and Gaussian. In particular, the entries are chosen such that $A$ is dense.

\item \textbf{Low-rank Linear Systems:} To generate a matrix $A$ with low rank, we first generate a random matrix similar to the general case, but we run SVD on $A$ and truncate the last $50 \%$ of the singular values to $0$.

\item \textbf{Ill-conditioned Linear Systems:} To generate a matrix $A$ that is ill-conditioned, we first generate a random matrix similar to the general case and run SVD on $A$, then set $50 \%$ of the singular values to $0.001$.
\end{itemize}

Having generated $A$ using one of the three schemes, we randomly generate a solution $x$ by the corresponding distribution and compute $b=Ax$. The goal of the algorithms is to recover $x$. The computational experiments are run on MATLAB 2019a. For BiCGSTAB, we use the module {\it BiCGSTABl} provided by MATLAB. The {\it BiCGSTABl} module requires a preconditioner, so we used the incomplete LU factorization. Without the preconditioner, there is no guarantee that BiCGSTAB converges at all. For each of the three settings a new matrix $A$ is used so that the experiments in the settings are independent.

Figures \ref{tfig:2} and \ref{tfig:3} demonstrate the computational results comparing the speed of the Triangle Algorithm to BiCGSTAB using $\varepsilon=.01$ and $\varepsilon=.001$. The $x$-axis represents the dimensions of $A$ and the $y$-axis represents the running time in seconds. As shown by both figures, the  Triangle Algorithm outperforms BiCGSTAB in speed for both Uniform and Gaussian random matrices in almost all cases. For both distributions, the Triangle Algorithm performs better in the low-rank and ill-conditioned cases and the runtime of the Triangle Algorithm does not increase drastically even when the dimension of $A$ become large.

\begin{figure}[!htb]
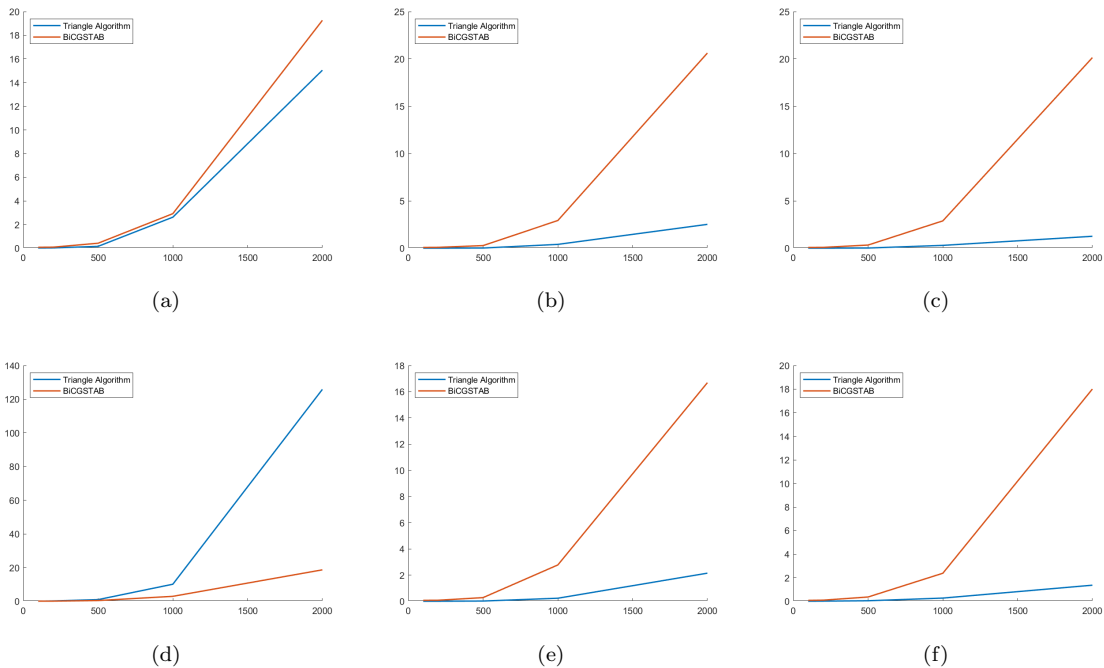

\centering
\subfloat[]{\label{fig:6}\includegraphics[width=.31\linewidth]{unif_no_low_no_ill_eps_001.png}}
\subfloat[]{\label{fig:7}\includegraphics[width=.31\linewidth]{unif_low_rank_eps_001.png}}
\subfloat[]{\label{fig:8}\includegraphics[width=.31\linewidth]{unif_ill_con_eps_001.png}}
\\
\subfloat[]{\label{fig:9}\includegraphics[width=.31\linewidth]{unif_no_low_no_ill_eps_0001.png}}
\subfloat[]{\label{fig:10}\includegraphics[width=.31\linewidth]{unif_low_rank_eps_0001.png}}
\subfloat[]{\label{fig:11}\includegraphics[width=.31\linewidth]{unif_ill_con_eps_0001.png}}
\caption{Figures \ref{fig:6}, \ref{fig:7}, \ref{fig:8}, \ref{fig:9}, \ref{fig:10}, and \ref{fig:11} show the plot comparing runtimes between Triangle Algorithm and BiCGSTAB for uniformly random matrices when the linear system is solvable in the regular, low-rank, and ill-conditioned cases respectively. Figures \ref{fig:6}, \ref{fig:7}, and \ref{fig:8} show results for $\varepsilon=0.01$. Figures \ref{fig:9}, \ref{fig:10}, and \ref{fig:11} show results for $\varepsilon=0.001$.}
\label{tfig:2}
\end{figure}

\begin{figure}[htb]
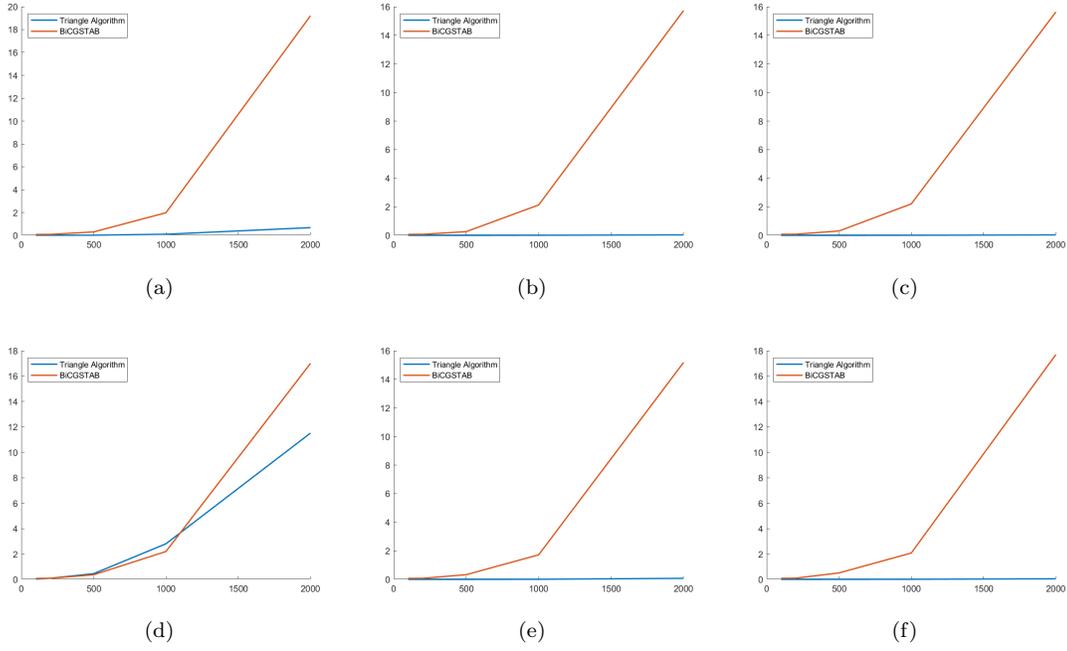

\centering
\subfloat[]{\label{fig:12}\includegraphics[width=.3\linewidth]{normal_no_low_no_ill_eps_001.png}}
\subfloat[]{\label{fig:13}\includegraphics[width=.3\linewidth]{normal_low_rank_eps_001.png}}
\subfloat[]{\label{fig:14}\includegraphics[width=.3\linewidth]{normal_ill_con_eps_001.png}}
\\
\subfloat[]{\label{fig:15}\includegraphics[width=.3\linewidth]{normal_no_low_no_ill_eps_0001.png}}
\subfloat[]{\label{fig:16}\includegraphics[width=.3\linewidth]{normal_low_rank_eps_0001.png}}
\subfloat[]{\label{fig:17}\includegraphics[width=.3\linewidth]{normal_ill_con_eps_0001.png}}
\caption{Figures \ref{fig:12}, \ref{fig:13}, \ref{fig:14}, \ref{fig:15}, \ref{fig:16}, and \ref{fig:17} show the plot comparing runtimes between Triangle Algorithm and BiCGSTAB for Gaussian random matrices in the regular, low-rank, and ill-conditioned cases respectively. Figures \ref{fig:12}, \ref{fig:13}, and \ref{fig:14} show results for $\varepsilon=0.01$. Figures \ref{fig:15}, \ref{fig:16}, and \ref{fig:17} show results for $\varepsilon=0.001$.}
\label{tfig:3}
\end{figure}

We also ran the Triangle Algorithm on matrices of various dimensions using different values of $\varepsilon$ . Figures 5 and 6 show plots of runtimes for different values of $\varepsilon$ for general uniform random matrices and low-rank uniform random matrices of dimensions 500, 1000, and 2000. The $x$-axis represents ${1}/{\varepsilon}$ and the $y$-axis represents running time. While Figure 5 shows that the runtimes of the Triangle Algorithm for general uniform random matrices have some dependence on ${1}/{\varepsilon}$, Figure 6 shows that changes to ${1}/{\varepsilon}$ did not significantly affect the runtimes of the Triangle Algorithm for low-rank uniform random matrices. These results further strengthen the case for using Triangle Algorithm to solve $Ax=b$, especially when $A$ might be of low-rank.

\begin{figure}[!htb]
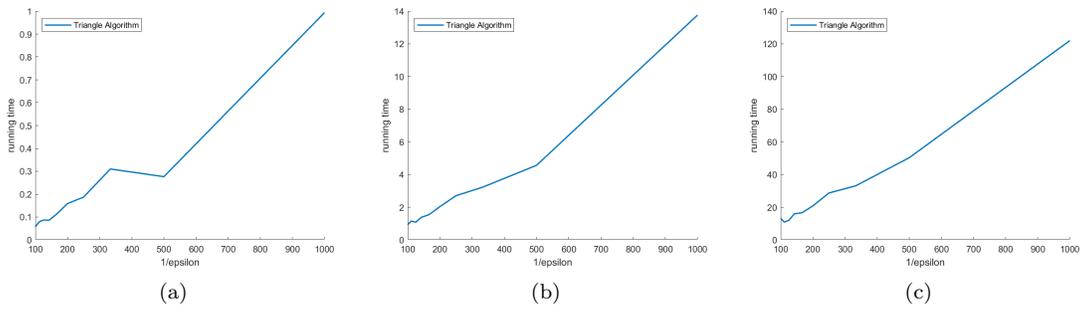

\centering
\subfloat[]{\label{fig:20}\includegraphics[width=.3\linewidth]{compare_different_epslilon_size_500.png}}
\subfloat[]{\label{fig:21}\includegraphics[width=.3\linewidth]{compare_different_epslilon_size_1000.png}}
\subfloat[]{\label{fig:22}\includegraphics[width=.3\linewidth]{compare_different_epslilon_size_2000.png}}

\caption{Figures \ref{fig:20},  \ref{fig:21}, \ref{fig:22} show the plot of running time for the Triangle Algorithm using different choices of $\varepsilon$ on general uniform random matrices of dimensions 500, 1000, and 2000 respectively. The $x$-axis represents $\frac{1}{\varepsilon}$ and the $y$-axis represents running time.}
\label{epsfig:2}
\end{figure}
\begin{figure}[htb]
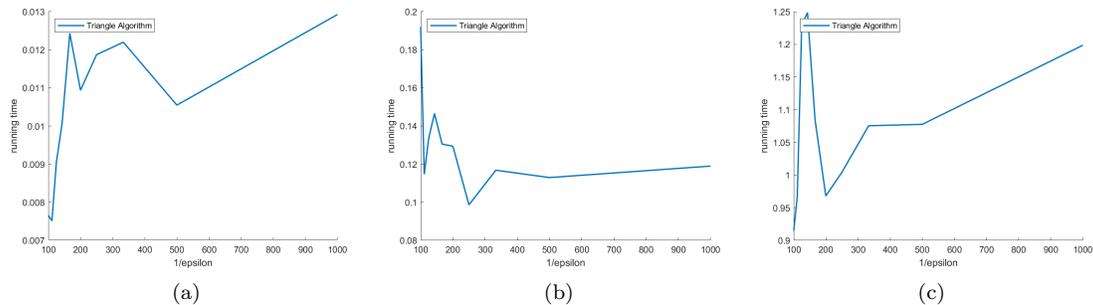

\centering
\subfloat[]{\label{fig:25}\includegraphics[width=.3\linewidth]{low_rank_compare_different_epslilon_size_500.png}}
\subfloat[]{\label{fig:26}\includegraphics[width=.3\linewidth]{low_rank_compare_different_epslilon_size_1000.png}}
\subfloat[]{\label{fig:27}\includegraphics[width=.3\linewidth]{low_rank_compare_different_epslilon_size_2000.png}}

\caption{Figures \ref{fig:25},  \ref{fig:26}, \ref{fig:27} show the plot of running time for the Triangle Algorithm using different choices of $\varepsilon$ on low-rank uniform random matrices of dimensions 500, 1000, and 2000 respectively. The $x$-axis represents $\frac{1}{\varepsilon}$ and the $y$-axis represents running time.}
\label{epsfig:3}
\end{figure}

\section*{Concluding Remarks}
Based on the geometrically inspired Triangle Algorithm for testing membership in a compact convex set, in this article we have developed an easy-to-implement version of the Triangle Algorithm for approximating the solution, or the minimum-norm least-squares solution to a linear system $Ax=b$, where $A$ is an $m \times n$ matrix. An important feature  of the Triangle Algorithm is that there are no constraints on $A$ such as invertibility, full-rankness, etc. In this article we have also compared computational results for solving square linear systems via the Triangle Algorithm and BiCGSTAB, the state-of-the-art algorithm for solving such systems. We have found the Triangle Algorithm to be extremely competitive.  While Triangle Algorithm outperforms BiCGSTAB in almost every setting of the experiment, it performs particularly well in the low-rank and ill-conditioned cases for solvable linear systems. The Triangle Algorithm can detect unsolvability of a linear system and go on approximating the least-squares or minimum-norm least-squares solution. In contrast, when a square linear system is not solvable BiCGSTAB does not converge. Based on these results we conclude the Triangle Algorithm is a powerful algorithm for solving linear systems and can be widely used in practice. Despite the fact that our analysis is with respect to real input, all the analysis can be extended to complex input. This is because the Triangle Algorithm is based on Euclidean distance, and over complex domain, the notion of distance for a complex vector is defined accordingly. Finally, as a matter of comparison we have also tested the Triangle Algorithm against the steepest descent method \cite{shew} and have found the steepest descent method too slow to compete. In our forthcoming work we will further consider the application of the Triangle Algorithm in solving least-squares problem and will contrast it with the steepest descent method and Nesterov's accelerated gradient method \cite{Nest}.

\bibliographystyle{plain}
\bibliography{biblio1}
\end{document}